\documentclass{article}
\usepackage{amsmath,amsthm,amsfonts,rotating}
\numberwithin{equation}{section}

\usepackage{amssymb}
\usepackage{rotating}
\usepackage{float}
\usepackage{graphics}
\usepackage{pstricks}
\usepackage{rotating}

\newtheorem{thm}{Theorem}[section]

\newtheorem{cor}[thm]{Corollary}
\newtheorem{lem}[thm]{Lemma}

\theoremstyle{definition}
\newtheorem{exa}{Example}
\newtheorem{defi}{Definition}[section]



\DeclareMathOperator{\Int}{Int}
\newcommand{\R}{\mathbb{R}} 
\newcommand{\N}{\mathbb{N}} 


\newcommand{\tlim}{\displaystyle\lim}

\newcommand{\defrac}{\displaystyle\frac}

\begin{document}
\title{Cone metric spaces and fixed point theorems of $T$-Kannan contractive mappings\thanks{Keywords: Fixed point, $T-$Kannan contractive mapping, complete cone metric space. Subjclass: 46J10, 46J15, 47H10. }}

\author{Jos\'e R. Morales$^a$ and Edixon Rojas$^b$\\\\
\small Department of Mathematics, Faculty of Science\\
\small University of Los Andes, M\'erida-5101, Venezuela.\\ 
\small $^a$moralesj@ula.ve,  $^b$edixonr@ula.ve
}

\date{}

\maketitle

\begin{abstract}
The purpose of this paper is to obtain sufficient conditions for the existence of a unique fixed point of $T$-Kannan type mappings on complete cone metric spaces depended on another function.
\end{abstract}



\section{Introduction}

In the paper \cite{HZ}, Guang and Xian  generalized the notion of  metric spaces, replacing the set of real numbers by an ordered Banach space defining in this way a cone metric space. These authors also described the convergence of sequences in this cone metric spaces and introduce the corresponding notion of completeness. Afterwards, they prove some fixed point theorems of contractive mappings on complete cone metric spaces. Posteriorly, some of the mentioned results were obtained  by Sh. Rezapour and R. Hambarani in \cite{RH} omitting the assumption of normality on the cone.

On the other hand, A. Beiranvand, S. Moradi, M. Omid and H. Pazandeh \cite{BMOP} introduce the classes of $T$-Contraction and $T-$Contractive functions, extending the Banach contraction principle and the Edelstein's fixed point theorem. S. Moradi in \cite{KM} introduce the $T$-Kannan contractive mapping which extend the well known Kannan's fixed point theorem given in \cite{Ka}.

The purpose of this paper is to analyze the existence (and uniqueness) of fixed points of  $T$-Kannan type contractive mappings $S$ defined on a complete cone metric space $(M,d)$, as well as, $T$-Chaterjea mappings which are introduced here (see Definition
\ref{def classes}). Our results generalize the respective theorems given in \cite{HZ} and \cite{KM}.

\section{Definitions and preliminary results}

In this section we recall the definition of cone metric space and some of their properties (see c.f.,\, \cite{HZ}). The following notions will be used in order to prove the main results.

\begin{defi}
Let $E$ be a real Banach space and $P$ a subset of $E$. The set $P$ is called a cone if and only if:
\begin{enumerate}
\item[P1-] $P$ is closed, non empty and $P\neq \{0\};$
\item[P2-] $a,b\in \R,\,\, a,b\geq 0,\,\, x,y\in P\Longrightarrow ax+by\in P;$
\item[P3-] $x\in P$ and $-x\in P\Rightarrow x=0$.
\end{enumerate}
Given a cone $P\subset E,$ we define a partial ordering $\leq$ with respect to $P$ by $x\leq y$ if and only if $y-x\in P.$ We write $x<y$ to indicate that $x\leq y$ but $x\neq y,$ while $x\ll y$ if and only if $y-x\in \Int P$, where $\Int P$ denotes the interior of the set $P$.
\end{defi}

\begin{defi}
Let $E$ be a Banach space and $P\subset E$ a cone. The cone $P$ is called normal if there is a number $K>0$ such that for all $x,y\in E,\,\, 0\leq x\leq y$ implies $\|x\|\leq K\|y\|.$ The least positive number $K$ satisfying the above inequality is called the normal constant of $P.$
\end{defi}
In the following we always suppose that $E$ is a Banach space, $P$ is a cone in $E$ with $\Int P\neq \emptyset$ and $\leq$ is partial ordering with respect to $P.$
\begin{defi}
Let $M$ be a non empty set. Suppose that the mapping $d: M\times M\longrightarrow E$ satisfies:
\begin{enumerate}
\item[d1-] $0<d(x,y)$ for all $x,y\in M$ and $d(x,y)=0$ if and only if $x=y;$
\item[d2-] $d(x,y)=d(y,x)$ for all $x,y\in M;$
\item[d3-] $d(x,y)\leq d(x,z)+d(y,z)$ for all $x,y,z\in M.$
\end{enumerate}
 Then, $d$ is called a cone metric on $M$ and $(M,d)$ is called a cone metric space.
\end{defi}
 Notice that the notion of cone metric space is more general than the corresponding of metric space. Examples of cone metric spaces can be found in \cite{HZ} and \cite{RH}.

\begin{defi}
Let $(M,d)$ be a cone metric space. Let $(x_n)$ be a sequence in $M$ and $x\in M$.
\begin{enumerate}
\item[(i)] $(x_n)$ converges to $x$ if for every $c\in E$ with $0\ll c$, there is an $n_0$ such that
           for all $n\geq n_0,\,\, d(x_n,x)\ll c.$ We denote this by $\tlim_{n\rightarrow \infty} x_n=x$ or $x_n\rightarrow x,\,\, (n\rightarrow \infty)$.

\item[(ii)] If for any $c\in E$ with $0\ll c,$ there is an $n_0$ such that for all $n,m\geq n_0,\,\,
            d(x_n,x_m)\ll c$, then $(x_n)$ is called a Cauchy sequence in $M$.
\end{enumerate}
$(M,d)$ is called a complete cone metric space, if every Cauchy sequence in $M$ is convergent in $M$.
\end{defi}
The following result will be useful for us to prove our main results.

\begin{lem}\textup{\cite{HZ}}
Let $(M,d)$ be a cone metric space, $P\subset E$ a normal cone with normal constant $K.$ Let $(x_n),\, (y_n)$ be sequences in $M$ and $x,y\in M.$
\begin{enumerate}
\item[\textup{(i)}] $(x_n)$ converges to $x$ if and only if $\tlim_{n\rightarrow \infty} d(x_n,x)=0;$
\item[\textup{(ii)}] If $(x_n)$ converges to $x$ and $(x_n)$ converges to $y$ then $x=y.$ That is the limit of
            $(x_n)$ is unique;
\item[\textup{(iii)}] If $(x_n)$ converges to $x$, then $(x_n)$ is Cauchy sequence;
\item[\textup{(iv)}] $(x_n)$ is a Cauchy sequence if and only if
\begin{equation*}
\tlim_{n,m\rightarrow \infty} d(x_n,x_m)=0;
\end{equation*}
\item[\textup{(v)}] If $x_n\longrightarrow x$ and $y_n\longrightarrow y,\,\, (n\rightarrow \infty)$ then
\begin{equation*}
d(x_n,y_n)\longrightarrow d(x,y).
\end{equation*}
\end{enumerate}
\end{lem}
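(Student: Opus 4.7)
The central idea is that in a normal cone, the order-topological notion of convergence built from the relation $\ll$ agrees with the norm topology on $E$. My plan is to establish this equivalence first (parts (i) and (iv)), and then derive (ii), (iii), (v) as easy corollaries of the triangle inequality together with normality.

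For (i), the forward direction uses normality directly: if $d(x_n,x)\ll c$ then in particular $0\leq d(x_n,x)\leq c$, so $\|d(x_n,x)\|\leq K\|c\|$. To push this to $\|d(x_n,x)\|\to 0$, I fix any $c_0\in\Int P$ and, given $\varepsilon>0$, observe that $c_\varepsilon:=\frac{\varepsilon}{2K\|c_0\|}c_0$ still lies in $\Int P$ (scaling a cone-interior point by a positive scalar preserves interiority) and satisfies $K\|c_\varepsilon\|<\varepsilon$. Applying the hypothesis to this $c_\varepsilon$ yields the norm estimate. For the backward direction, given $c\gg 0$, pick $\delta>0$ with $B_\delta(c)\subset P$; then whenever $\|d(x_n,x)\|<\delta/2$, a short triangle-inequality argument shows that a small ball around $c-d(x_n,x)$ still sits inside $P$, so $c-d(x_n,x)\in\Int P$, i.e.\ $d(x_n,x)\ll c$. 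Part (iv) is identical, with $d(x_n,x_m)$ in place of $d(x_n,x)$.

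With (i) in hand, (ii) follows immediately from d3: $0\leq d(x,y)\leq d(x_n,x)+d(x_n,y)$, so by normality $\|d(x,y)\|\leq K(\|d(x_n,x)\|+\|d(x_n,y)\|)\to 0$, forcing $d(x,y)=0$. For (iii), given $c\gg 0$ pick $n_0$ with $d(x_n,x)\ll c/2$ for $n\geq n_0$; then $c/2-d(x_n,x)$ and $c/2-d(x_m,x)$ both lie in $\Int P$, and since $d(x_n,x)+d(x_m,x)-d(x_n,x_m)\in P$, adding these and using $\Int P+P\subset\Int P$ gives $c-d(x_n,x_m)\in\Int P$, i.e.\ $d(x_n,x_m)\ll c$.

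The only genuinely tricky item is (v), and I expect it to be the main obstacle because $u:=d(x_n,y_n)-d(x,y)$ need not lie in $P$, so normality cannot be applied to $u$ directly. The remedy is a two-sided triangle bound: applying d3 twice yields $-w\leq u\leq w$ where $w:=d(x_n,x)+d(y_n,y)$. Consequently $0\leq u+w\leq 2w$, and normality gives $\|u+w\|\leq 2K\|w\|$, whence $\|u\|\leq \|u+w\|+\|w\|\leq(2K+1)\|w\|$. Since $\|w\|\to 0$ by (i), we conclude $\|d(x_n,y_n)-d(x,y)\|\to 0$, which is the claimed convergence in $E$.
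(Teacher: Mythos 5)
Your proof is correct, and since the paper states this lemma without proof (simply citing \cite{HZ}), there is nothing in the paper to diverge from; your argument is essentially the standard one from that reference. In particular, you correctly handle the two delicate points: the equivalence of $\ll$-convergence with norm convergence via scaling an interior point of $P$ (using that $0\notin\Int P$, so $\|c_0\|>0$), and the fact that in (v) the difference $d(x_n,y_n)-d(x,y)$ need not lie in $P$, which you resolve by the two-sided bound $-w\leq u\leq w$ before invoking normality.
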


\begin{defi}
Let $(M,d)$ be a cone metric space, $P$ a normal cone with normal constant $K$ and $T: M\longrightarrow M.$ Then
\begin{enumerate}
\item[(i)] $T$ is said to be continuous if $\tlim_{n\rightarrow \infty} x_n=x$ implies that $\tlim_{n
           \rightarrow \infty} T(x_n)=T(x),$ for all $(x_n)$ in $M;$
\item[(ii)] $T$ is said to be subsequentially convergent, if we have for every sequence $(y_n)$ that
            $T(y_n)$ is convergent, implies $(y_n)$ has a convergent subsequence;
\item[(iii)] $T$ is said to be sequentially convergent if we have, for every sequence $(y_n),$ if
             $T(y_n)$ is convergent, then $(y_n)$ also is convergent.
\end{enumerate}
\end{defi}

\section{$T$-Kannan and $T$-Chatterjea contractions: their fixed points on cone metric spaces}

The following theorems are the main results of this paper. First, we are going to introduce some new definitions on cone metric spaces which are based in the ideas of S. Moradi \cite{KM}.
\begin{defi}\label{def classes}
Let $(M,d)$ be a cone metric space and $T,S: M\longrightarrow M$ two functions.
\begin{enumerate}
\item[K1-] A mapping $S$ is said to be a $T-$Kannan contraction, ($TK_1-$Contraction) if there is $b\in [0,1/2)$ constant such
\begin{equation*}
d(TSx,TSy)\leq b[d(Tx,TSx)+d(Ty,TSy)]
\end{equation*}
for all $x,y\in M$.
\item[K2-] A mapping $S$ is said to be a $T$-Chatterjea contraction, ($TK_2-$Contraction) if there is
            $c\in [0,1/2)$ constant such that
\begin{equation*}
d(TSx,TSy)\leq c[d(Tx,TSy)+d(Ty,TSx)]
\end{equation*}
 for all $x,y\in M$.
\end{enumerate}
\end{defi}

\begin{exa}
Let $E=\left(C_{[0,1]},\R\right)$, $P=\{\varphi\in E\,/\, \varphi\geq 0\}\subset E$, $M=\R$ and $d: M\times M\longrightarrow E$ defined by $d(x,y)=|x-y|e^t,$ where $e^t\in E$. Then $(M,d)$ is a cone metric space. We consider the functions $T,S: M\longrightarrow M$ defined by $Tx=x^2$ and $Sx=\defrac{x}{2}.$ Then
\begin{equation*}\begin{array}{ccl}
d(TSx,TSy) &=& |TSx-TSy|e^t=\left|\defrac{x^2}{4}-\defrac{y^2}{4}\right|e^t\\ \\
 &\leq& \defrac{1}{3} \bigg[|Tx-TSx|+|y-TSy|\bigg]e^t\\ \\
 &=& \defrac{1}{3}[d(Tx,TSx)+d(y,TSy)].
\end{array}
\end{equation*}
Therefore, $S$ is a $TK_1-$Contraction. Moreover, it is not difficult to show that $S$ is besides a $TK_2-$Contraction too.
\end{exa}
The following result extend  Theorem 3 of \cite{HZ} and Theorem 2.1 of \cite{KM}.

\begin{thm}
Let $(M,d)$ be a complete cone metric space, $P$ be a normal cone with normal constant $K$, in addition let $T: M\longrightarrow M$ be a one to one, continuous function and $S: M\longrightarrow M$ a $TK_1-$Contraction. Then,
\begin{enumerate}
\item[\textup{(1)}] For every $x_0\in M$
\begin{equation*}
\tlim_{n\rightarrow \infty} d(TS^n x_0, TS^{n+1}x_0)=0;
\end{equation*}

\item[\textup{(2)}] There is $v\in M$ such that
\begin{equation*}
\tlim_{n\rightarrow \infty} TS^n x_0=v;
\end{equation*}

\item[\textup{(3)}] If $T$ is subsequentially convergent, then $(S^nx_0)$ has a convergent subsequence;

\item[\textup{(4)}] There is a unique $u\in M$ such that
\begin{equation*}
Su=u;
\end{equation*}

\item[\textup{(5)}] If $T$ is sequentially convergent, then for each $x_0\in M$ the iterate sequence $(S^nx_0)$
      converges to $u$.
\end{enumerate}
\end{thm}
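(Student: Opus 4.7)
My plan is the standard Picard-iteration argument adapted to the $T$-image of the orbit. Fix $x_0 \in M$ and set $x_n := S^n x_0$, so that $S x_n = x_{n+1}$. Applying the $TK_1$ inequality to the pair $(x_n, x_{n-1})$ gives
\begin{equation*}
d(T x_{n+1}, T x_n) = d(TS x_n, TS x_{n-1}) \leq b\bigl[d(T x_n, T x_{n+1}) + d(T x_{n-1}, T x_n)\bigr],
\end{equation*}
which rearranges to $d(T x_{n+1}, T x_n) \leq h\, d(T x_n, T x_{n-1})$ with $h := b/(1-b) \in [0,1)$ because $b < 1/2$. Iterating, $d(T S^{n+1} x_0, T S^n x_0) \leq h^n\, d(T S x_0, T x_0)$, and normality of $P$ with constant $K$ yields the norm bound $\|d(TS^{n+1}x_0, TS^n x_0)\| \leq K h^n \|d(TSx_0, Tx_0)\| \to 0$, establishing (1) via Lemma~(i).

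For (2), I would use the triangle axiom $d3$ repeatedly to get, for $m > n$,
\begin{equation*}
d(TS^n x_0, TS^m x_0) \leq (h^n + h^{n+1} + \cdots + h^{m-1})\, d(Tx_0, TSx_0) \leq \frac{h^n}{1-h}\, d(Tx_0, TSx_0),
\end{equation*}
and then apply normality to conclude $\|d(TS^n x_0, TS^m x_0)\| \to 0$, i.e.\ $(TS^n x_0)$ is Cauchy by Lemma~(iv). Completeness of $(M,d)$ gives some $v \in M$ with $TS^n x_0 \to v$. Part (3) is then immediate: subsequential convergence of $T$ applied to $y_n := S^n x_0$ yields a convergent subsequence of $(S^n x_0)$.

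For (4), take such a subsequence $S^{n_k} x_0 \to u$; continuity of $T$ gives $TS^{n_k} x_0 \to Tu$, so by uniqueness of limits $Tu = v$. To show $Su = u$, apply $TK_1$ to the pair $(u, S^{n_k-1} x_0)$:
\begin{equation*}
d(TSu, TS^{n_k} x_0) \leq b\bigl[d(Tu, TSu) + d(TS^{n_k-1} x_0, TS^{n_k} x_0)\bigr].
\end{equation*}
Letting $k \to \infty$, using (1) and $TS^{n_k} x_0 \to Tu$ together with Lemma~(v), one obtains $d(TSu, Tu) \leq b\, d(Tu, TSu)$; since $b < 1$ this forces $d(TSu, Tu) = 0$, so $TSu = Tu$, and injectivity of $T$ gives $Su = u$. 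Uniqueness is direct: if $Sw = w$ as well, then $d(Tu, Tw) = d(TSu, TSw) \leq b[d(Tu,Tu)+d(Tw,Tw)] = 0$, whence $Tu = Tw$ and $u = w$ by injectivity. Finally, (5) is immediate: sequential convergence of $T$ combined with $TS^n x_0 \to v$ forces $(S^n x_0)$ to converge, and the limit must be the unique fixed point $u$.

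The only subtle point is the interplay between the cone-metric convergence $\ll c$ and the norm convergence in $E$; I will invoke Lemma~(i), (iv), (v) to pass freely between the two whenever normality is needed (in the geometric sum estimate and in taking $k \to \infty$ inside the contraction inequality). The injectivity of $T$ is essential precisely at the two places where we convert $Tu = Tw$ into $u = w$.
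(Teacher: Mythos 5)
Your proposal is correct, and its overall architecture --- Picard iteration on the $T$-image of the orbit, the ratio $h=b/(1-b)<1$, the geometric-series Cauchy estimate, completeness, subsequential convergence to get $u$ with $Tu=v$, then $TSu=Tu$ and injectivity --- is exactly the paper's. The one place you genuinely diverge is the step $d(TSu,Tu)=0$. The paper never passes to a limit inside an order inequality: it sandwiches $d(TSu,Tu)$ via the three-term triangle bound
\begin{equation*}
d(TSu,Tu)\leq d(TSu,TS^{n_i}x_0)+d(TS^{n_i}x_0,TS^{n_i+1}x_0)+d(TS^{n_i+1}x_0,Tu),
\end{equation*}
absorbs the $b\,d(Tu,TSu)$ term into the left-hand side, and then takes norms so that normality alone forces $\|d(TSu,Tu)\|=0$. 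You instead apply the $TK_1$ inequality to the pair $(u,S^{n_k-1}x_0)$ and let $k\to\infty$ in the cone ordering, invoking Lemma (v) and the closedness of $P$ (axiom P1) to get $d(TSu,Tu)\leq b\,d(TSu,Tu)$, and then P3 to conclude $(1-b)d(TSu,Tu)=0$. Both are valid; your route is shorter and, importantly, correctly uses the order argument rather than taking norms at that point (a norm argument there would only give $\|d(TSu,Tu)\|\leq Kb\|d(TSu,Tu)\|$, which fails when $Kb\geq 1$), while the paper's route keeps every limit at the level of norms, at the cost of a longer chain of estimates. Your uniqueness argument is also cleaner than the paper's (which is slightly garbled at that point); just make sure to spell out in (5) that the limit $w$ of $(S^nx_0)$ satisfies $Tw=v=Tu$ by continuity of $T$, whence $w=u$ by injectivity.
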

\begin{proof}
Let $x_0$ be an arbitrary point in $M$. We define the iterate sequence $(x_n)$ by $x_{n+1}=Sx_n=S^nx_0$. We have
\begin{equation*}
\begin{array}{ccl}
d(Tx_n,Tx_{n+1}) &=& d(TSx_{n-1},TSx_n) \\ &\leq& b[d(Tx_{n-1},TSx_{n-1})+d(Tx_n,TSx_n)]
\end{array}
\end{equation*}
so, $d(Tx_n,Tx_{n+1})\leq \defrac{b}{1-b}d(Tx_{n-1},Tx_n)$ and we can conclude, by repeating the same argument, that
\begin{equation}\label{eq3.1}
d(TS^nx_0, TS^{n+1}x_0)\leq \left(\defrac{b}{1-b}\right)^n d(Tx_0,TSx_0).
\end{equation}
From \eqref{eq3.1} we have,
\begin{equation*}
\|d(TS^nx_0, TS^{n+1}x_0)\|\leq \left(\defrac{b}{1-b}\right)^n K\|d(Tx_0,TSx_0)\|
\end{equation*}
where $K$ is the normal constant of $E$. By inequality above we get
\begin{equation*}
\tlim_{n\rightarrow \infty} \|d(TS^nx_0, TS^{n+1}x_0)\|=0
\end{equation*}
hence,
\begin{equation}\label{eq3.2}
\tlim_{n\rightarrow \infty} d(TS^nx_0, TS^{n+1}x_0)=0.
\end{equation}
By inequality \eqref{eq3.1}, for every $m,n\in \N$ with $m>n$ we have,
\begin{equation*}\begin{array}{ccl}
d(Tx_n, Tx_m) &\leq& d(Tx_n, Tx_{n+1})+\ldots+ d(Tx_{m-1}, Tx_m)\\ \\ &\leq& \left[\left(\defrac{b}{1-b}\right)^n+\ldots+\left(\defrac{b}{1-b}\right)^{m-1}\right]d(Tx_0,TSx_0)\\ \\ &=& \left(\defrac{b}{1-b}\right)^n \defrac{1}{1-\left(\defrac{b}{1-b}\right)}d(Tx_0,TSx_0)
\end{array}
\end{equation*}
so,
\begin{equation}\label{eq3.3}
d(TS^n x_0, TS^mx_0)\leq \left(\defrac{b}{1-b}\right)^n
\defrac{1}{1-\left(\defrac{b}{1-b}\right)}d(Tx_0,TSx_0)
\end{equation}
from \eqref{eq3.3} we have,
\begin{equation*}\|d(TS^nx_0, TS^mx_0)\|\leq \left(\defrac{b}{1-b}\right)^n
\defrac{K}{1-\left(\defrac{b}{1-b}\right)}\|d(Tx_0,TSx_0)\|
\end{equation*}
where $K$ is the normal constant of $E$. Taking limit and keeping in mind that $\frac{b}{1-b}<1$, we obtain
\begin{equation*}\tlim_{n,m\rightarrow \infty} \|d(TS^nx_0, TS^mx_0)\|=0,
 \end{equation*}
 in this way we have, $\tlim_{n\rightarrow \infty} d(TS^nx_0, TS^mx_0)=0,$ which implies that $(TS^nx_0)$ is a Cauchy sequence in $M$. Since $M$ is a complete cone metric space, then there is $v\in M$ such that
\begin{equation}\label{eq3.4}
\tlim_{n\rightarrow \infty} TS^nx_0=v.
\end{equation}
Now, if $T$ is subsequentially convergent, $(S^nx_0)$ has a convergent subsequence. So, there are $u\in M$ and $(x_{n_i})$ such that
\begin{equation}\label{eq3.5}
\tlim_{i\rightarrow \infty} S^{n_i}x_0=u.
\end{equation}
Since $T$ is continuous and by \eqref{eq3.5} we obtain
\begin{equation}\label{eq3.6}
\tlim_{i\rightarrow \infty} TS^{n_i}x_0=Tu
\end{equation}
 by \eqref{eq3.4} and \eqref{eq3.6} we conclude that
\begin{equation}\label{eq3.7}
Tu=v.
\end{equation}
On the other hand,
\begin{equation*}\begin{array}{ccl}
d(TSu,Tu) &\leq& d(TSu,TS^{n_i}(x_0))+d(TS^{n_i}x_0, TS^{n_i+1}x_0)+d(TS^{n_i+1}x_0,Tu)\\
&\leq& b[d(Tu,TSu)+d(TS^{n_i-1}x_0, TS^{n_i}x_0)]\\
&&+\left(\defrac{b}{1-b}\right)^{n_i}d(Tx_0,TSx_0)+d(TS^{n_i+1}x_0,Tu)
\end{array}
\end{equation*}
hence,
\begin{equation*}\begin{array}{ccl}
d(TSu,Tu) &\leq& \defrac{b}{1-b}d(TS^{n_i-1}x_0, TS^{n_i}x_0)+\defrac{1}{1-b}\left(\defrac{b}{1-b}\right)^{n_i}d(TSx_0, Tx_0)\\
&&+ \defrac{1}{1-b}d(TS^{n_i+1}x_0,Tu)
\end{array}
\end{equation*}
thus,
\begin{equation*}\begin{array}{ccl}
\|d(TSu,Tu)\| &\leq& \defrac{bK}{1-b}\|d(TS^{n_i-1}x_0, TS^{n_i}x_0)\|\\
&&+\defrac{K}{1-b}\left(\defrac{b}{1-b}\right)^{n_i}\|(TSx_0,Tx_0)\|\\
&&+ \defrac{K}{1-b}\|d(TS^{n_i+1}x_0,Tu)\|\longrightarrow 0\quad (i\rightarrow \infty)
\end{array}
\end{equation*}
 where $K$ is the normal constant of $M$. The convergence above give us that $d(TSu,Tu)=0$, which implies the equality $TSu=Tu$. Since $T$ is one to one, then $Su=u$, consequently $S$ has a fixed point. Because $S$ is a $TK_1-$Contraction we have
 \begin{equation*}
 d(TSu,TSv)\leq b[d(Tu,TSu)+d(Tv,TSv)].
 \end{equation*}
  If $v$ is another fixed point of $S$, then from the injectivity of $T$ we get $Su=Sv$, or which is the same, the fixed point is unique.  Finally, if $T$ is sequentially convergent, by replacing $(n)$ for $(n_i)$ we conclude that
\begin{equation*}\tlim_{n\rightarrow \infty} S^nx_0=u.
\end{equation*}
This shows that $(S^nx_0)$ converges to the fixed point of $S$.
\end{proof}
If we take $Tx=x$ in Theorem 3.3 we get the following,
\begin{cor}[\cite{HZ}, Theorem 3]
Let $(M,d)$ be a complete cone metric space, $P$ a normal cone with normal constant $K$. Suppose that the mapping $S: M\longrightarrow M$ satisfies the contractive condition
\begin{equation*}
d(Sx,Sy)\leq b[d(x,Sx)+d(y,Sy)]
\end{equation*}
for all $x,y\in M$ and $b\in[0,\frac{1}{2})$. Then $S$ has a unique fixed point in $M$ and for any $x_0\in M,$ iterative sequence $(S^n x_0)$ converges to the fixed point.
\end{cor}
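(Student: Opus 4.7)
The plan is to observe that this corollary is precisely the specialization of Theorem 3.3 obtained by taking $T$ to be the identity map on $M$, so the proof reduces to verifying that $T=\mathrm{id}$ satisfies every hypothesis imposed on $T$ in Theorem 3.3 and that the $TK_1$-contraction inequality collapses to the stated Kannan-type inequality. Concretely, I would first note that $T=\mathrm{id}$ is trivially one-to-one and continuous on $(M,d)$, and moreover that it is sequentially convergent in the sense of Definition 2.5(iii): if $(Ty_n)=(y_n)$ converges, then $(y_n)$ converges. Thus every structural hypothesis on $T$ in Theorem 3.3 is automatic.

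Next I would substitute $T=\mathrm{id}$ into the defining inequality of a $TK_1$-contraction:
\begin{equation*}
d(TSx,TSy) = d(Sx,Sy) \leq b\bigl[d(Tx,TSx)+d(Ty,TSy)\bigr] = b\bigl[d(x,Sx)+d(y,Sy)\bigr],
\end{equation*}
for all $x,y\in M$ and some $b\in[0,1/2)$. This is exactly the hypothesis imposed on $S$ in the corollary, so $S$ is a $TK_1$-contraction with $T=\mathrm{id}$.

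Having checked the hypotheses, I would invoke Theorem 3.3 directly. Part (4) of that theorem produces a unique $u\in M$ with $Su=u$, which is the required unique fixed point. Part (5), applied with $T=\mathrm{id}$ (which is sequentially convergent), gives that for every $x_0\in M$ the iterative sequence $(S^n x_0)$ converges to $u$, which is precisely the remaining claim of the corollary.

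There is essentially no obstacle here: the corollary is a formal consequence of the main theorem via the trivial choice $T=\mathrm{id}$, and the only thing that warrants any remark is that the identity map qualifies as both continuous and sequentially convergent. One could alternatively write out a direct proof mirroring the argument of Theorem 3.3 (establishing that $(S^n x_0)$ is Cauchy via the estimate $d(S^n x_0, S^{n+1}x_0)\leq \bigl(\tfrac{b}{1-b}\bigr)^n d(x_0, Sx_0)$ and then passing to the limit), but given that Theorem 3.3 has just been proved in full generality, the cleanest presentation is simply to cite it.
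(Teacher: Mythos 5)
Your proposal is correct and matches the paper exactly: the paper derives this corollary by setting $Tx=x$ in Theorem 3.3, which is precisely your argument, with the added (and welcome) care of checking that the identity map is one-to-one, continuous, and sequentially convergent so that parts (4) and (5) of the theorem apply.
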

If we take $M=\R$ in the Theorem 3.3 we obtain the following:

\begin{cor}[\cite{KM}, Theorem 2.1]
Let $(M,d)$ be a complete metric space and $T,S: M\longrightarrow M$ be mappings such that $T$ is continuous, one to one and subsequentially convergent. If $b\in [0,1/2)$ and
\begin{equation*}
d(TSx,TSy)\leq b[d(Tx,TSx)+d(Ty,TSy)]
 \end{equation*}
for all $x,y\in M$. Then $S$ has a unique point and if $T$ is sequentially convergent, then for every $x_0\in M$ the sequence iterates $(S^nx_0)$ converges to the fixed point of $S$.
\end{cor}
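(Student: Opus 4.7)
The plan is to mimic the classical Kannan fixed-point argument, but carried out at the level of the $T$-image sequence $(TS^n x_0)$ rather than on $(S^n x_0)$ itself; the $TK_1$-contractive inequality is tailored exactly for this. The hypotheses on $T$ (one-to-one, continuous, and the two convergence properties) then transport the conclusion back to $S$ and to $(S^n x_0)$.

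For (1) and (2), set $x_n := S^n x_0$ and apply the $TK_1$-contraction to the pair $(x_{n-1}, x_n)$. Using $Sx_{n-1}=x_n$ and $Sx_n=x_{n+1}$, one gets
\[
d(Tx_n, Tx_{n+1}) \leq b\bigl[d(Tx_{n-1}, Tx_n) + d(Tx_n, Tx_{n+1})\bigr],
\]
so (since $b<1/2$) $d(Tx_n, Tx_{n+1}) \leq h\, d(Tx_{n-1}, Tx_n)$ with $h := b/(1-b) \in [0,1)$. Iteration yields the geometric decay $d(Tx_n, Tx_{n+1}) \leq h^n\, d(Tx_0, TSx_0)$. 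Normality of $P$ converts this to a $\|\cdot\|$-inequality with constant $K$, which proves (1) via part (i) of the auxiliary lemma. A telescoping bound plus a geometric-series estimate for $d(Tx_n, Tx_m)$ then shows $(Tx_n)$ is Cauchy in $M$; completeness produces $v \in M$ with $Tx_n\to v$, giving (2).

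Statement (3) is immediate from the definition of subsequential convergence: since $(TS^n x_0)=(Tx_n)$ converges, some subsequence $(x_{n_i})$ converges, say to $u \in M$. The main work is (4): to show $Su = u$, I would estimate $d(TSu, Tu)$. Using $TSx_{n_i} = Tx_{n_i+1}$ and the triangle inequality,
\[
d(TSu, Tu) \leq d(TSu, TSx_{n_i}) + d(Tx_{n_i+1}, Tu),
\]
and then applying the $TK_1$-contraction to $(u,x_{n_i})$ on the first term, one can bring $d(TSu, Tu)$ to the left side to obtain
\[
(1-b)\, d(TSu, Tu) \leq b\, d(Tx_{n_i}, Tx_{n_i+1}) + d(Tx_{n_i+1}, Tu).
\]
Continuity of $T$ together with $x_{n_i}\to u$ gives $Tx_{n_i}\to Tu$; combined with (2) this forces $Tu = v$ and hence $Tx_{n_i+1}\to Tu$. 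Passing to norms by normality, the right-hand side tends to zero, so $d(TSu, Tu) = 0$ and injectivity of $T$ yields $Su=u$. Uniqueness follows in one line: if $Sw = w$ as well, the $TK_1$-inequality applied to $(u,w)$ gives $d(TSu, TSw) \leq b[d(Tu, TSu) + d(Tw, TSw)] = 0$, so $Tu = Tw$ and $u=w$ by injectivity.

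Part (5) then drops out: under sequential convergence of $T$, the convergence of $(Tx_n)$ implies convergence of the full sequence $(x_n)$, and since a subsequence already tends to $u$, so does the whole sequence. The step I expect to be the main obstacle is the rearrangement in (4): one has to pick the correct pair in the $TK_1$-inequality and then exploit $h=b/(1-b)<1$ to isolate $d(TSu, Tu)$ on the left-hand side. Everywhere else, the passage from cone-valued $\leq$-inequalities to numerical norm inequalities through the normal constant $K$ is routine, provided it is applied consistently whenever one wants to take limits.
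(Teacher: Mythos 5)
Your proposal is correct, but it is not how the paper itself handles this statement: the paper disposes of the corollary in one line, by specializing Theorem 3.3 (the statement ``take $M=\R$'' there is evidently meant as ``take $E=\R$ with $P=[0,\infty)$'', so that the cone metric space collapses to an ordinary metric space and the normal constant is $K=1$). What you have written instead is a self-contained direct proof, and it reproduces, almost step for step, the paper's proof of Theorem 3.3: the same reduction $d(Tx_n,Tx_{n+1})\leq h\,d(Tx_{n-1},Tx_n)$ with $h=b/(1-b)<1$, the same telescoping Cauchy estimate, the same use of subsequential convergence plus continuity to get $Tu=v$, and the same isolation of $d(TSu,Tu)$ on the left. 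Two small differences are worth noting. First, in step (4) you use the two-term triangle inequality $d(TSu,Tu)\leq d(TSu,TSx_{n_i})+d(Tx_{n_i+1},Tu)$ with the contraction applied to the pair $(u,x_{n_i})$, whereas the paper inserts the extra middle term $d(TS^{n_i}x_0,TS^{n_i+1}x_0)$ and contracts against $(u,x_{n_i-1})$; both work, yours is slightly leaner. Second, your uniqueness argument ($Sw=w$ forces $d(TSu,TSw)\leq b\cdot 0$, hence $Tu=Tw$ and $u=w$ by injectivity) is actually more transparent than the paper's rather garbled version of the same step. The only cosmetic blemish is that your invocations of normality and the constant $K$ are vacuous here, since in an ordinary metric space $d$ is already real-valued; they cost nothing but are not needed. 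In short: correct, and mathematically identical to the paper's proof of the parent theorem, of which the paper treats this corollary as an immediate special case.
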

If we take $M=\R$ and $Tx=x$ in the Theorem 3.3, we obtain the following result given by Kannan in \cite{Ka}.

\begin{cor}
Let $(M,d)$ be a complete metric space and $S: M\longrightarrow M$ a mapping. If there exists $b\in [0,1/2)$ such that
\begin{equation*}
d(Sx,Sy)\leq b[d(x,Sx)+d(y,Sy)]
 \end{equation*}
for all $x,y\in M$. Then $S$ has a unique fixed point and $(S^nx_0)$ converges to the fixed point of $S$ for all $x_0\in M.$
\end{cor}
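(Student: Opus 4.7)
The plan is to recognize this corollary as a direct specialization of Theorem 3.3 and spell out the two choices that collapse the general statement to the classical Kannan setting. Specifically, I would take the ambient Banach space to be $E=\R$ with cone $P=[0,\infty)$ and auxiliary mapping $T=\mathrm{id}_M$. With this choice the cone metric axioms (d1)--(d3) reduce to the ordinary metric axioms, so any complete metric space $(M,d)$ is tautologically a complete cone metric space in the sense of the earlier definitions, and the cone $P=[0,\infty)$ is normal with normal constant $K=1$.

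Next I would verify that the three hypotheses on $T$ required by Theorem 3.3 are trivially satisfied by the identity. Injectivity and continuity are immediate. For sequential convergence, if $T(y_n)=y_n$ is convergent then $(y_n)$ itself is convergent, so the identity is sequentially convergent (and, in particular, subsequentially convergent). With $Tx=x$, the $TK_1$-contraction inequality
\begin{equation*}
d(TSx,TSy)\leq b\bigl[d(Tx,TSx)+d(Ty,TSy)\bigr]
\end{equation*}
becomes $d(Sx,Sy)\leq b[d(x,Sx)+d(y,Sy)]$, which is precisely the hypothesis of the corollary. Hence $S$ is a $TK_1$-contraction for this choice of $T$.

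Finally I would invoke Theorem 3.3 to obtain all the conclusions simultaneously: part (4) of the theorem gives the existence and uniqueness of a fixed point $u\in M$ with $Su=u$, and part (5), applicable because $T=\mathrm{id}$ is sequentially convergent, gives that for every $x_0\in M$ the iterates $(S^n x_0)$ converge to $u$. No further estimation or Cauchy-sequence argument is needed, since all of it is already carried out inside Theorem 3.3. There is no genuine obstacle in this proof; the only thing worth being careful about is checking that the identity map indeed meets the \emph{sequential convergence} requirement and not merely the weaker subsequential one, which is what allows us to upgrade convergence of a subsequence of $(S^n x_0)$ to convergence of the whole sequence.
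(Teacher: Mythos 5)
Your proof is correct and is exactly the route the paper takes: the corollary is obtained from Theorem 3.3 by specializing to $E=\R$ with cone $P=[0,\infty)$ (so the cone metric is an ordinary metric and $K=1$) and $T=\mathrm{id}_M$, which is trivially one-to-one, continuous and sequentially convergent. Your write-up is in fact more explicit than the paper's, which simply asserts the specialization (and slightly misstates it as ``take $M=\R$'' when what is meant is $E=\R$).
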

The following result extend the Theorem 4 of \cite{HZ}.
\begin{thm}
Let $(M,d)$ be a complete cone metric space, $P$ be a normal cone with normal constant $K$, let in addition $T: M\longrightarrow M$ be a continuous, one to one function and $S: M\longrightarrow M$ a $TK_2-$ Contraction. Then
\begin{enumerate}
\item[\textup{(1)}] For every $x_0\in M$
\begin{equation*}
\tlim_{n\rightarrow \infty} d(TS^n x_0, TS^{n+1}x_0)=0.
\end{equation*}
\item[\textup{(2)}] There is $v\in M$ such that
\begin{equation*}
\tlim_{n\rightarrow \infty} TS^nx_0=v.
\end{equation*}
\item[\textup{(3)}] If $T$ is subsequentially convergent, then $(S^nx_0)$ has a convergent subsequence.

\item[\textup{(4)}] There is a unique $u\in M$ such that
\begin{equation*}
Su=u.
\end{equation*}

\item[\textup{(5)}] If $T$ is sequentially convergent, then for each $x_0\in M$, $(S^n(x_0))$ converges to $u$.
\end{enumerate}
\end{thm}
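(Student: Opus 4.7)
The plan is to follow the template of Theorem 3.3 almost line by line, since conclusions (1)--(5) are structurally identical; the only genuinely new work lies in the opening contraction estimate, which must now accommodate the cross-terms produced by the $TK_2$ condition, and in the verification that the subsequential limit is a fixed point.

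First, I would start with an arbitrary $x_0\in M$, set $x_n:=S^n x_0$, and apply the $TK_2$ inequality to the pair $(x_{n-1},x_n)$ to obtain
\begin{equation*}
d(Tx_n,Tx_{n+1}) \leq c\bigl[d(Tx_{n-1},Tx_{n+1})+d(Tx_n,Tx_n)\bigr] = c\, d(Tx_{n-1},Tx_{n+1}),
\end{equation*}
the second summand vanishing by axiom (d1). A further triangle inequality $d(Tx_{n-1},Tx_{n+1})\leq d(Tx_{n-1},Tx_n)+d(Tx_n,Tx_{n+1})$ combined with the hypothesis $c\in[0,1/2)$ produces
\begin{equation*}
d(TS^n x_0,TS^{n+1} x_0) \leq \left(\defrac{c}{1-c}\right)^n d(Tx_0,TSx_0),
\end{equation*}
the exact analogue of (3.1) with $b$ replaced by $c$. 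From this point on, conclusions (1), (2) and (3) transfer verbatim from the proof of Theorem 3.3: pass to norms using the normal constant $K$, telescope to show that $(TS^n x_0)$ is Cauchy in norm (hence Cauchy in the cone metric), invoke completeness of $M$ to extract $v=\tlim_{n\to\infty}TS^n x_0$, and then use the subsequential-convergence hypothesis together with the continuity of $T$ to obtain $u\in M$ with $S^{n_i}x_0\to u$ and $Tu=v$.

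The step $Su=u$ must be redone, since the $TK_2$ bound differs structurally from the $TK_1$ bound. I would start from
\begin{equation*}
d(TSu,Tu) \leq d(TSu,TS^{n_i+1}x_0)+d(TS^{n_i+1}x_0,Tu),
\end{equation*}
apply the $TK_2$ condition with $x=u$, $y=S^{n_i}x_0$ to bound the first summand by $c\bigl[d(Tu,TS^{n_i+1}x_0)+d(TS^{n_i}x_0,TSu)\bigr]$, and then triangle-inequality $d(TS^{n_i}x_0,TSu)\leq d(TS^{n_i}x_0,Tu)+d(Tu,TSu)$ so that the copy of $d(Tu,TSu)=d(TSu,Tu)$ can be absorbed on the left with factor $(1-c)$. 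The resulting estimate, after division by $1-c$, has a right-hand side involving only quantities that vanish in norm as $i\to\infty$ by $TS^{n_i}x_0,TS^{n_i+1}x_0\to v=Tu$ and the normality of $P$, hence $d(TSu,Tu)=0$ and the injectivity of $T$ gives $Su=u$. Uniqueness then follows from a single application of the $TK_2$ inequality to two fixed points $u,w$:
\begin{equation*}
d(Tu,Tw)=d(TSu,TSw)\leq c\bigl[d(Tu,TSw)+d(Tw,TSu)\bigr]=2c\,d(Tu,Tw),
\end{equation*}
and since $2c<1$ we get $Tu=Tw$, hence $u=w$. Conclusion (5) is routine: if $T$ is sequentially convergent, then $(TS^n x_0)\to v$ forces $(S^n x_0)$ to converge, and its limit is identified with $u$ via $T$.

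The main obstacle is the opening recurrence, where the cross-distance $d(Tx_{n-1},Tx_{n+1})$ obscures the geometric decay until the triangle inequality is applied; a secondary but similar point is the $Su=u$ step, where the contraction must be set up so that $d(TSu,Tu)$ reappears on the right and can be absorbed on the left before passing to norms. Once both manipulations are in place, the remainder of the argument parallels the $TK_1$ proof.
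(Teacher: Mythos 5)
Your proposal is correct and follows essentially the same route as the paper: the opening recurrence (applying the $TK_2$ condition to consecutive iterates so that one cross-term vanishes and the other is split by the triangle inequality and absorbed, yielding the ratio $h=\frac{c}{1-c}$) is exactly the paper's computation, and the Cauchy/completeness/subsequence steps are transferred from Theorem 3.3 just as the paper does. Where the paper merely writes ``the rest of the proof is similar to the proof of Theorem 3.3,'' you correctly observe that the $Su=u$ and uniqueness steps cannot be copied verbatim because the contractive condition has changed, and your reworked absorption argument for $d(TSu,Tu)$ and the $2c<1$ uniqueness estimate are the right adaptations.
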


\begin{proof} Let $x_0$ be an arbitrary point in $M$. We define the iterative sequence $(x_n)$ by $x_{n+1}=Sx_n=S^nx_0$. Since $S$ is a $TK_2-$Contraction, we have
\begin{eqnarray*}
d(TSx_n,TSx_{n+1})&\leq&c[d(Tx_n,TSx_{n+1})+d(Tx_{n+1},TSx_n)]\\
                  &\leq&c[d(TSx_{n-1},TSx_n)+d(TSx_n,TSx_{n+1})].
\end{eqnarray*}
Thus,
\begin{equation*}
d(TSx_n,TSx_{n+1})\leq\dfrac{c}{1-c}d(TSx_{n-1},TSx_n)=hd(TSx_{n-1},TSx_n)
\end{equation*}
where $h:=\frac{c}{1-c}$. Recursively, we obtain
\begin{equation}\label{eq3.8}
d(TSx_n, TSx_{n+1})\leq h^nd(TSx_0, TSx_1)
\end{equation}
therefore,
\begin{equation*}
\|d(TSx_n, TSx_{n+1})\|\leq h^nK\|d(TSx_0, TSx_1)\|
\end{equation*}
where $K$ is the normal constant of $M$. Hence
\begin{equation*}
\tlim_{n\rightarrow \infty} \|d(TSx_n, TSx_{n+1})\|=0,
\end{equation*}
this implies that
\begin{equation*}
\tlim_{n\rightarrow \infty} d(TS^n x_0, TS^{n+1}x_0)=0.
\end{equation*}
By \eqref{eq3.8}, for every $m,n\in \N$ with $n>m$ we have,
\begin{eqnarray*}
d(TSx_m,TSx_n)&\leq&d(TSx_n,TSx_{n+1})+\dots+d(TSx_{m-1},TSx_m)\\
               &\leq&[h^{n-1}+h^{n-2}+\dots+h^m]d(TSx_0,TSx_1)\\
               &\leq&\dfrac{h^m}{1-h}d(TSx_0,TSx_1),
\end{eqnarray*}
taking norm we get
\begin{equation*}
\|d(TSx_m,TSx_n)\|\leq \dfrac{h^m}{1-h}K\|d(TSx_0,TSx_1)\|,
\end{equation*}
consequently, we have
\begin{equation*}
\tlim_{n,m\rightarrow \infty} d(TSx_n, TSx_m)=0
 \end{equation*}
 hence $(TS^nx_0)$ is a Cauchy sequence in $M$ and since $M$ is a complete cone metric space, there is $v\in M$ such that
\begin{equation*}
\tlim_{n\rightarrow \infty} TS^nx_0=v.
\end{equation*}
 The rest of the proof is similar to the proof of Theorem 3.3.
\end{proof}

Notice that if we take $Tx=x$ in the previous theorem, we obtain the Theorem 4 of \cite{HZ}.
\begin{exa}
Let $E=(C_{[0,1]},\R)$, $P=\{\varphi\in E\,/\,\varphi\geq 0\}\subset E,\,\, M=[0,1]$ and $d(x,y)=|x-y|e^t,\,\, e^t\in E.$ It is clear that $(M,d)$ is a complete cone metric space.

Let $T,S: M\longrightarrow M$ be two functions defined by $Tx=x^2$ and $Sx=\defrac{x}{2}.$ Then,
\begin{enumerate}
\item[(i)] $T$ and $S$ are continuous mappings.

\item[(ii)] It is clear that $S$ is a contraction function

\item[(iii)] $S$ is not a Kannan contraction, that is, $S$ is not $K_1-$Contraction.

\item[(iv)] $S$ is a $TK_1-$Contraction, because
\begin{equation*}\begin{array}{ccl}
d(TSx, TS y) &=& |TSx-TSy|e^t=\left|\defrac{x^2}{4}-\defrac{y^2}{4}\right|e^t\\
 &\leq& \defrac{1}{3}\bigg(|T x-TSx)|+|Ty-TSy|\bigg)e^t\\
  &=& \defrac{1}{3}\bigg[d(Tx, TSx)+d(Ty, TSy)\bigg].
\end{array}\end{equation*}
 Therefore, by Theorem 3.3, $v=0$ is the unique fixed point of $S$ in $M$.
\end{enumerate}
\end{exa}

\end{document}